\theoremstyle{plain}
\newtheorem{thm}{Theorem}[section]
\newtheorem{lem}[thm]{Lemma}
\newtheorem{prop}[thm]{Proposition}
\theoremstyle{definition}
\newtheorem{defn}{Definition}[section]
\let\hom\relax
\DeclareMathOperator{\hom}{Hom}
\newcommand{\A}{\ensuremath{\text{Mack}_R(G)}}
\newcommand{\B}{\ensuremath{\mu_R(G)\text{-mod}}}
\title{Box Product of Mackey Functors\\ in Terms of Modules}
\author{Zhulin Li}
\date{\today}
\begin{document}

\begin{abstract}
The box product of Mackey functors has been studied extensively in Lewis's notes.
As shown in Thevenaz and Webb's paper, a Mackey functor may be identified with a module over a certain algebra, called the Mackey algebra.
We aim at describing the box product, in the sense of Mackey algebra modules.
For a cyclic $p$-group $G$, we recover a result from Mazur's thesis.
We generalize it to a general finite group $G$ in this article.
\end{abstract}

\maketitle
\section*{Introduction}
A Mackey functor is an algebraic structure, related to many natural constructions from finite groups, such as group cohomology and the algebraic K-theory of group rings.
The study of Mackey functor in abstract began in 1980s.
Dress\cite{Dress} and Green\cite{Green} first gave the axiomatic formulation of Mackey functors.
Several equivalent descriptions of Mackey functors were given by Dress\cite{Dress}, Lindner\cite{Lindner}, Lewis\cite{Lewis} and Thevenaz\cite{Thevenaz}.
Specifically, Lewis\cite{Lewis} introduces box product and Thevenaz\cite{Thevenaz} describes a Mackey functor as a module over the Mackey algebra $\mu_R(G)$.

In this article, we give an inductive description of the box product of two left $\mu_R(G)$-modules.
The main goal is to construct this box product explicitly and to prove that it is equivalent with the box product of two Mackey functors.
When $G$ is a cyclic $p$-group, we recover the formula of Mazur\cite{Mazur}.

\section*{Acknowledgement}
This work was made possible in part by support from 2015 Spring MIT Undergraduate Research Opportunities Program (UROP).
I would like to thank my supervisor Dr. Emanuele Dotto at MIT for proposing this interesting topic and for his comments that greatly improved the manuscript.
I also wish to thank Dr. Zhiyi Huang at HKU for helpful discussions, which provide lots of insight.

\tableofcontents
\nomenclature{$G$}{a finite group}
\nomenclature{$R$}{a unital and commmutative ring}
\nomenclature{$\mu_R(G)$}{the Mackey algebra}
\nomenclature{$e$}{the trivial subgroup of $G$ with only one element}
\nomenclature{$\pi^H_K$}{a map from $G/K$ to $G/H$, mapping $gK$ to $gH$}
\nomenclature{$c_{g,H}$}{a map from $G/H$ to $G/^gH$, mapping $kH$ to $(kg^{-1})^gH$}
\nomenclature{$^gH$}{$gHg^{-1}$}
\nomenclature{$H^g$}{$g^{-1}Hg$}
\nomenclature{$K'< H$}{$K'$ is a proper subgroup of $H$}
\nomenclature{$K\leq H$}{$K$ is a subgroup of $H$}
\nomenclature{$\omega(G)$}{an intermediate category of definition for Mackey functors}
\nomenclature{$\Omega_R(G)$}{a category of definition for Mackey functors}
\nomenclature{$B^G$}{the Burnside ring Mackey functor}
\nomenclature{$r_f$}{restriction of $f$}
\nomenclature{$t_f$}{transfer of $f$}
\nomenclature{\A}{the category of Mackey functors}
\nomenclature{\B}{the category of left $\mu_R(G)$-modules}
\nomenclature{$C_{g,H}$}{$G/^gH\leftarrow G/H=G/H$ or its value under some Mackey functor}
\nomenclature{$R_K^H$}{$G/K=G/K\to G/H$ or its value under some Mackey functor}
\nomenclature{$I^H_K$}{$G/H\leftarrow G/K=G/K$ or its value under some Mackey functor}
\nomenclature{$[J\backslash H/K]$}{the set of representatives of $J\backslash H/K$}
\printnomenclature

\section{Mackey functors}
Throughout this article, we assume that $G$ is a finite group and that $R$ is a unital, commutative ring.
There are several equivalent definitions of Mackey functors and we concentrate on two of them here.
Before giving the definitions, we introduce an auxiliary category $\Omega_R(G)$ and the Mackey algebra $\mu_R(G)$.
\subsection{The category $\Omega_R(G)$}
We recall the definition of the category $\Omega_R(G)$ from \cite{Lewis}.
A finite group $G$ gives rise to a category $\omega(G)$ whose objects are finite $G$-sets and where the morphisms from $X$ to $ Y$ are the equivalence classes of diagrams of $G$-sets $X\leftarrow V\to Y$. Two such diagrams are said to be equivalent if there is a commutative diagram
\begin{center}
\begin{tikzcd}
 & V \arrow{dd}[swap]{\sigma}\arrow{ld}\arrow{rd} &\\
 X & & Y\\
 & V'\arrow{lu}\arrow{ru} &
\end{tikzcd}
\end{center}
where $\sigma$ is an isomorphism of $G$-sets.
To define the composition of morphisms, we consider a morphism from $X$ to $Y$ represented by a diagram $X\leftarrow V\to Y$ and a morphism from $Y$ to $Z$ represented by a diagram $Y\leftarrow W\to Z$.
We form the pullback
\begin{center}
\begin{tikzcd}
&&U\arrow{ld}{f'}\arrow{rd}{g'}&&\\
& V\arrow{rd}{f}\arrow{ld}&&W\arrow{ld}{g}\arrow{rd}&\\
X&&Y&&Z
\end{tikzcd}
\end{center}
which defines a diagram $X\leftarrow U\to Z$, hence a morphism from $X$ to $Z$.
Such a pullback always exists and we can express it explicitly as \[U = \{(v, w)\in V\times W:f(v) = g(w)\},\] where $f'(v, w) = v$ and $g'(v, w) = w$.
By defining addition in $\hom_{\omega(G)}(X,Y)$ as
$$(X\xleftarrow{\alpha}V\xrightarrow{\beta}Y) + (X\xleftarrow{\alpha'}V'\xrightarrow{\beta'}Y) := (X\xleftarrow{\alpha+\alpha'}V\sqcup V'\xrightarrow{\beta+\beta'}Y),$$
$\hom_{\omega(G)(X,Y)}$ becomes a free abelian monoid, as shown in \cite{Thevenaz}.
Extending the scalars to the ring $R$, we get a free $R$-module $$\hom_{\Omega_R(G)}(X,Y) := R\hom_{\omega(G)}(X,Y)$$ on the same basis as that of free monoid $\hom_{\omega(G)}(X,Y)$.
Let $\Omega_R(G)$ be a category with the same objects as $\omega(G)$ and hom-set $\hom_{\Omega_R(G)}(X,Y)$ defined as above.
$\Omega_R(G)$ is an $R$-additive category.

\subsection{The Mackey algebra $\mu_R(G)$}
We define the Mackey algebra as $$\mu_R(G) := \bigoplus_{H,K\leq G}\hom_{\Omega_R(G)}(G/H, G/K),$$ where the multiplication is defined on the components in the direct sum by composition of morphisms in the category $\Omega_R(G)$, or zero if two morphisms cannot be composed.

For convenience, we recall the notation from \cite{Shulman}.
\begin{defn}
Let $f:X\to Y$ be a $G$-equivariant map of finite $G$-sets.
Then two spans
\begin{center}
\begin{tikzcd}
& X\arrow{rd}{f}\arrow[-, double equal sign distance]{ld} &\\
X & & Y
\end{tikzcd}
\text{ and }
\begin{tikzcd}
& X\arrow{ld}{f}\arrow[-, double equal sign distance]{rd} &\\
Y & & X
\end{tikzcd}
\end{center}
are called the restriction $r_f$ and transfer $t_f$, respectively, of $f$.
\end{defn}
\begin{defn}
Let $K\leq H\leq G$ and $g\in G$.
Then define $R^H_K$, $I^H_K$ and $C_{g, H}$ as
\begin{equation*}\begin{split}
&R^H_K = r_{\pi^H_K}: \left(G/K=G/K\xrightarrow[]{\pi^H_K} G/H\right)\\
&I^H_K = t_{\pi^H_K}: \left(G/H\xleftarrow[]{\pi^H_K} G/K=G/K\right)\\
&C_{g, H} = t_{c_{g,H}} = r_{c_{g^{-1}, ^gH}}:\left(G/^gH\xleftarrow[]{c_{g,H}} G/H = G/H\right),
\end{split}\end{equation*}
where $\pi_K^H:G/K\to G/H$ denotes the canonical quotient map, mapping $gK$ to $gH$, and $c_{g, H}:G/H\to G/^gH$ denotes the conjugation map, mapping $kH$ to $(kg^{-1})^gH$.
Observe that $R^H_K, I^H_K$ and $C_{g, H}$ are all elements in Mackey algebra $\mu_R(G)$.
\end{defn}

For conjuation, the notation $C_g$ is preferred to $C_{g, H}$ for simplicity when there is no ambiguity.
It is easy to check that the following identities hold:
\begin{enumerate}[(1)]
\setcounter{enumi}{-1}
\item $R^H_H = I^H_H = C_{h,H}$ for all $H\leq G$ and $h\in H$
\item $R^K_J R^H_K = R^H_J$ for all subgroups $J\leq K\leq H$
\item $I^H_K I^K_J = I^H_J$ for all subgroups $J\leq K\leq H$
\item $C_g C_h = C_{gh}$ for all $g, h\in G$
\item $C_g R^H_K = R^{^gH}_{^gK}C_g$ for all subgroups $K\leq H$ and $g\in G$
\item $C_gI_K^H = I_{^gK}^{^gH}C_g$ for all subgroups $K\leq H$ and $g\in G$
\item $R^H_J I_K^H = \sum_{g\in [J\backslash H/K]} I_{^gK\cap J}^J C_g R^K_{K\cap J^g}$ for all subgroups $J,K\leq H$
\item $\sum_{H\leq G}I_H^H$ serves as the unit in $\mu_R(G)$.
\end{enumerate}

We use $[J\backslash H/K]$ to denote the set of representatives of the double coset $J\backslash H/K$.
In fact, the structure of Mackey algebra $\mu_R(G)$ is rather simple:
\begin{lem}[from \cite{Thevenaz}]
Hom-set $\hom_{\Omega_R(G)}(G/K, G/H)$ is a free $R$-module, with basis represented by the diagrams
\begin{equation*}\begin{split}
I^K_{^gL}C_{g,L}R^H_L = \left(G/K\xleftarrow[]{\pi^K_{^gL}c_{g,L}}G/L\xrightarrow[]{\pi^H_L}G/H\right)
\end{split}\end{equation*}
where $g\in [K\backslash G/H]$ and $L$ is a subgroup of $H\cap K^g$ taken up to $H\cap K^g$-conjugation.
\end{lem}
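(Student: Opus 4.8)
The plan is to reduce the statement to a purely combinatorial classification of spans and then enumerate those spans by orbit data. Since $\hom_{\Omega_R(G)}(G/K,G/H)=R\hom_{\omega(G)}(G/K,G/H)$ by definition, and since the monoid addition is disjoint union of the apex of a span, the free $R$-basis is precisely the set of isomorphism classes of indecomposable elements of the free abelian monoid $\hom_{\omega(G)}(G/K,G/H)$, i.e.\ of spans $G/K\xleftarrow{\alpha}V\xrightarrow{\beta}G/H$ whose apex $V$ is a single $G$-orbit. Because every finite $G$-set decomposes uniquely into orbits, it therefore suffices to classify, up to the equivalence of the excerpt, the transitive spans $G/K\xleftarrow{\alpha}G/L\xrightarrow{\beta}G/H$.

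First I would parametrize such a span. A $G$-map out of $G/L$ is determined by the image of the coset $eL$, subject only to the requirement that $L$ fix that image, so $\alpha$ and $\beta$ are encoded by elements $aK,bH$ with $\alpha(eL)=aK$, $\beta(eL)=bH$ and $L\subseteq {}^aK\cap {}^bH$; conversely any such triple $(a,b,L)$ defines a transitive span. Next I would unwind the equivalence: an isomorphism of spans $\sigma\colon G/L'\xrightarrow{\sim}G/L$ is given by $\sigma(eL')=cL$ with $L'={}^cL$, and commutativity of the two triangles (evaluated at $eL'$, using that $\alpha,\beta$ are $G$-maps) forces $a'K=caK$ and $b'H=cbH$. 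Hence $(a,b,L)\sim(a',b',L')$ exactly when there is $c\in G$ with $L'={}^cL$, $a'K=caK$, and $b'H=cbH$.

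The heart of the argument is to produce canonical representatives. I would observe that the pair $(\alpha(eL),\beta(eL))=(aK,bH)$ is a point of the $G$-set $G/K\times G/H$ under the diagonal action, whose orbit is classified by the double coset $Ka^{-1}bH\in K\backslash G/H$, and that the equivalence relation on triples is generated by this diagonal action (the $a'K=caK$, $b'H=cbH$ part) together with the replacement of $L$ by ${}^cL$. Choosing for each orbit the representative $(g^{-1}K,H)$ with $g\in[K\backslash G/H]$ normalizes the span to one with $a=g^{-1}$, $b=e$; its legs then send $xL$ to $xg^{-1}K$ and to $xH$, which is exactly the span $I^K_{{}^gL}C_{g,L}R^H_L$. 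In the normalized form the constraint reads $L\subseteq {}^{g^{-1}}K\cap H=H\cap K^g$, and the residual freedom consists of those $c$ stabilizing $(g^{-1}K,H)$, namely $c\in H\cap K^g$, acting on $L$ by conjugation; thus the normalized spans are indexed precisely by $g\in[K\backslash G/H]$ together with a subgroup $L\le H\cap K^g$ taken up to $H\cap K^g$-conjugation.

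I expect the main obstacle to be the bookkeeping in this last step: one must verify both that distinct indices give non-isomorphic (hence $R$-linearly independent) spans and that every transitive span is accounted for, all while keeping the conjugation conventions ${}^gH=gHg^{-1}$ and $H^g=g^{-1}Hg$ consistent through the normalization and through the identification of the legs of $I^K_{{}^gL}C_{g,L}R^H_L$. Once the bijection between the basis spans and the index set $\{(g,L)\}$ is established, freeness over $R$ is immediate from the free abelian monoid structure of $\hom_{\omega(G)}(G/K,G/H)$ recalled in \cite{Thevenaz}.
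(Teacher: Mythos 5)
The paper does not actually prove this lemma; it is quoted from Th\'evenaz--Webb, so there is no in-paper argument to compare against. Your proposal is a correct, self-contained proof and follows the standard route of the cited source: reduce to spans with transitive apex via the free abelian monoid structure, encode a transitive span $G/K\xleftarrow{\alpha}G/L\xrightarrow{\beta}G/H$ by the triple $(aK,bH,L)$ with $L\le {}^aK\cap{}^bH$, identify the equivalence relation with the diagonal $G$-action plus conjugation of $L$, and normalize to $(g^{-1}K,H)$ with residual stabilizer $H\cap K^g$ acting on $L$ by conjugation. The identification of the normalized span with $I^K_{{}^gL}C_{g,L}R^H_L$ and the constraint $L\le H\cap K^g$ are consistent with the paper's conventions ${}^gH=gHg^{-1}$, $H^g=g^{-1}Hg$, so the only remaining work is the routine bookkeeping you already flag.
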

In other words, the Mackey algebra $\mu_R(G)$ is generated by $R^H_K$, $I^H_K$ and $C_{g, H}$'s as an $R$-algebra.

\subsection{Definition of Mackey functors}
\begin{defn}[from \cite{Dress}]
A Mackey functor is a $R$-additive functor $M:\Omega_R(G)^{\text{op}}\to R\text{-mod}$.
They form a category with natural transformations as morphisms and we denote this category by \A.
\end{defn}

It is shown in \cite{Thevenaz} that the category $\B$ of left $\mu_R(G)$-modules is equivalent to $\A$ via the following equivalence of categories
\begin{equation*}\begin{split}
\Phi:\A &\longleftrightarrow \B\\
M&\longmapsto\oplus_{H\leq G}M(G/H)\\
\left(G/H\mapsto I_H^H N\right)&\longmapsfrom N.
\end{split}\end{equation*}
Since $\sum_{H\leq G}I^H_H$ is the unit in $\mu_R(G)$, a left $\mu_R(G)$-module $N$ can be graded into $N=\oplus_{H\leq G}I^H_HN$.
Observe that the multiplication by $R^H_K$ maps $I^H_H N$ to $I^K_K N$ and the other grades to zero.
Similarly, multiplication by $I^H_K$ maps $I^K_KN$ to $I^H_H N$ and the other grades to zero.
Multiplication by $C_{g,H}$ maps $I^H_H N$ to $I^{^gH}_{^gH}N$ and the other grades to zero.
Also note that $I^H_H\Phi M = M(G/H)$ for later use.

\section{The Box product in \A}
The box product is a symmetric monoidal structure on \A.
The box product in $\A$ has been studied in \cite{Lewis} and we summarize it in this section.
The result we are more interested in is that maps from the box product of $M$ and $N$ to $P$ can be characterized by Dress parings.

Given two Mackey functors $M,N\in\A$, we can form the exterior product
\begin{equation*}\begin{split}
M\overline{\square}N:\Omega_R(G)^{\text{op}}\times\Omega_R(G)^{\text{op}}&\longrightarrow R\text{-mod}\\
(X, Y)&\longmapsto M(X)\otimes N(Y).
\end{split}\end{equation*}
\begin{defn}[Box product in \A]
\label{def:box_product_1}
The box product $M\square N$ is defined to be the left Kan extension of $M\overline{\square}N$ along the Cartesian product functor $\times:\Omega_R(G)^{\text{op}}\times\Omega_R(G)^{\text{op}}\longrightarrow \Omega_R(G)^{\text{op}}$.
\begin{center}
\begin{tikzcd}
\Omega_R(G)^{\text{op}}\times\Omega_R(G)^{\text{op}}\arrow{r}{M\overline{\square}N}\arrow{d}{\times} & R\text{-mod}\\
\Omega_R(G)^{\text{op}}\arrow[dashrightarrow]{ru}[swap]{M\square N}&
\end{tikzcd}
\end{center}
\end{defn}

If a Mackey functor $M\in\A$ is implicit, we use $r_f$ for both a morphism $\left(X=X\xrightarrow[]{f}Y\right)$ in $\Omega_R(G)$ and its value $M(Y)\to M(X)$ under $M$.
Similarly for $t_f$.

\begin{lem}[from \cite{Lewis}]
\label{lem:dress}
A map $\theta:M\square N\to P$ determines and is determined by a collection of $R$-module homomorphisms $$\theta_{X}:M(X)\otimes N(X)\to P(X)$$ for every finite $G$-set $X$, such that the following three diagrams commute for each $G$-equivariant map $f:X\to Y$.
\begin{center}
\begin{tikzcd}
M(Y)\otimes N(Y)\arrow{r}{\theta_Y}\arrow{d}{r_f\otimes r_f} & P(Y)\arrow{d}{r_f}\\
M(X)\otimes N(X)\arrow{r}{\theta_X} & P(X)
\end{tikzcd}\\
\begin{tikzcd}
& M(X)\otimes N(X)\arrow{r}{\theta_X} & P(X)\arrow{dd}{t_f}\\
M(X)\otimes N(Y)\arrow{ru}{id\otimes r_f}\arrow{rd}{t_f\otimes id} & &\\
& M(Y)\otimes N(Y)\arrow{r}{\theta_Y} & P(Y)
\end{tikzcd}\\
\begin{tikzcd}
& M(X)\otimes N(X)\arrow{r}{\theta_X} & P(X)\arrow{dd}{t_f}\\
M(Y)\otimes N(X)\arrow{ru}{r_f\otimes id}\arrow{rd}{id\otimes t_f} & &\\
& M(Y)\otimes N(Y)\arrow{r}{\theta_Y} & P(Y)
\end{tikzcd}
\end{center}
\end{lem}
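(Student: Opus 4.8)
The plan is to read everything off the universal property of the left Kan extension. By Definition~\ref{def:box_product_1} we have $M\square N=\mathrm{Lan}_{\times}(M\overline{\square}N)$, so the universal property of $\mathrm{Lan}_{\times}$ supplies a natural bijection
\[
\hom_{\A}(M\square N,P)\;\cong\;\mathrm{Nat}\bigl(M\overline{\square}N,\,P\circ\times\bigr),
\]
whose right-hand side is the set of families of $R$-module maps $\eta_{X,Y}\colon M(X)\otimes N(Y)\to P(X\times Y)$ natural in $(X,Y)\in\Omega_R(G)^{\mathrm{op}}\times\Omega_R(G)^{\mathrm{op}}$. Hence a map $\theta\colon M\square N\to P$ is literally the same datum as such a family $\eta$, and the content of the lemma is to repackage the $\eta_{X,Y}$, which live over the product $G$-sets $X\times Y$, as the diagonal data $\theta_X$ living over a single $X$, translating the naturality of $\eta$ into the three displayed squares.

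First I would reduce the naturality of $\eta$ to a small set of generators. In $\Omega_R(G)$ every span $X\xleftarrow{\alpha}V\xrightarrow{\beta}Y$ factors as $r_\beta\circ t_\alpha$, and a pair of spans factors as $(s,\mathrm{id})\circ(\mathrm{id},s')$; therefore naturality of $\eta$ is equivalent to naturality against the four families $(r_f,\mathrm{id})$, $(t_f,\mathrm{id})$, $(\mathrm{id},r_f)$, $(\mathrm{id},t_f)$ for $G$-maps $f$. Since $\times$ sends these to $r_{f\times\mathrm{id}}$, $t_{f\times\mathrm{id}}$, $r_{\mathrm{id}\times f}$, $t_{\mathrm{id}\times f}$, the surviving conditions are $r_{f\times\mathrm{id}}\circ\eta=\eta\circ(r_f\otimes\mathrm{id})$ and $t_{f\times\mathrm{id}}\circ\eta=\eta\circ(t_f\otimes\mathrm{id})$, together with their second-variable analogues.

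Next I would set up the explicit correspondence. Writing $\Delta_X\colon X\to X\times X$ for the diagonal and $p_1,p_2$ for the projections off a product, define
\[
\theta_X:=r_{\Delta_X}\circ\eta_{X,X},\qquad \eta_{X,Y}:=\theta_{X\times Y}\circ\bigl(r_{p_1}\otimes r_{p_2}\bigr).
\]
The two round trips are identities: the passage $\eta\mapsto\theta\mapsto\eta$ collapses by restriction-naturality together with $(p_1\times p_2)\circ\Delta_{X\times Y}=\mathrm{id}_{X\times Y}$, while $\theta\mapsto\eta\mapsto\theta$ collapses by applying the first (restriction) square to $\Delta_X$ and using $p_i\circ\Delta_X=\mathrm{id}_X$. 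This shows the two assignments are mutually inverse bijections, so it only remains to match the three squares with naturality of $\eta$.

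The first square is immediate: from $\Delta_Y\circ f=(f\times f)\circ\Delta_X$ and restriction-naturality of $\eta$ one gets $r_f\circ\theta_Y=\theta_X\circ(r_f\otimes r_f)$. The two transfer squares are the crux, and I expect them to be the main obstacle, since they are not formal but rest on the base-change compatibility of restriction and transfer: for a pullback square of finite $G$-sets one has $r_b\circ t_a=t_{a'}\circ r_{b'}$, which is exactly how composition in $\Omega_R(G)$ is computed by pullback (the geometric form of identity~(6)). Concretely, for the second square I would push $\theta_X\circ(\mathrm{id}\otimes r_f)$ through $\eta$ and collapse the diagonal-then-restriction to the graph $\gamma_f\colon X\to X\times Y$, $x\mapsto(x,f(x))$; one then checks that the cospan $X\times Y\xrightarrow{f\times\mathrm{id}}Y\times Y\xleftarrow{\Delta_Y}Y$ has pullback $X$ with legs $\gamma_f$ and $f$, so base change gives $r_{\Delta_Y}\circ t_{f\times\mathrm{id}}=t_f\circ r_{\gamma_f}$, which is precisely the commutativity of the second square. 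The third square is obtained by exchanging the two variables. The real work is thus locating the correct pullback squares (diagonals and graphs) and confirming that the Mackey double-coset formula degenerates to a single term on them; the remaining steps are formal Kan-extension bookkeeping.
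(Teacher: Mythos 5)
The paper does not actually prove this lemma --- it cites it from Lewis and points to Shulman for a proof --- but the bijection it records immediately afterwards ($\theta_X$ obtained from the Kan adjunction followed by $r_{\Delta}$) is exactly the correspondence you construct, so your argument is a correct fleshing-out of the intended approach, and your identification of the one non-formal step --- the base-change identity $r_{\Delta_Y}\circ t_{f\times\mathrm{id}}=t_f\circ r_{\gamma_f}$ coming from the pullback of $f\times\mathrm{id}$ along the diagonal --- is on target. The only point left implicit is the converse verification that the three squares imply naturality of the reconstructed $\eta_{X,Y}=\theta_{X\times Y}\circ\bigl(r_{p_1}\otimes r_{p_2}\bigr)$ against transfers (it follows from the Frobenius squares applied to $f\times\mathrm{id}$ together with the same pullback identities), which you should spell out since the lemma asserts a two-way determination.
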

A good exposition and proof of this lemma can be found in \cite{Shulman}.
The data in this lemma is called a Dress paring.
The natural transformations from $M\square N$ to another Mackey functor $P$ are the same as the Dress parings from $M$ and $N$ to $P$, via the natural bijection $$\hom_{\A}(M\square N, P)\cong\text{Dress}(M, N; P),$$ that maps a map $\theta:M\square N\to P$ to $\theta_X:M(X)\otimes N(X)\to P(X\times X)\xrightarrow[]{r\Delta} P(X)$.
The first map comes from the Kan adjunction and $r\Delta$ is the restriction associated to the diagonal map of $G$-sets $X\to X\times X$.

In \A, the Burnside ring Mackey functor $$B^G(-) := \hom_{\Omega_R(G)}(-, G/G)$$ is the unit for box product, as shown in \cite{Lewis}.
In this way, $(\A, B^G(-), \square)$ is a symmetric monoidal category.

\section{The Box Product in $\B$}
Given two left $\mu_R(G)$-modules $M$ an $N$, we can form an $R$-module $A_H$ for each $H\leq G$ by induction on the cardinality of subgroups of $G$
\begin{equation*}\begin{split}
A_e & := (I^e_eM)\otimes_R(I^e_e N)\\
A_H & := \left((I^H_HM)\otimes_R(I^H_HN)\right) \oplus \bigoplus_{K<H}A_K.
\end{split}\end{equation*}
We say $A_H$ is of grade $H$.
By combining all the grades together, we get $A:=\oplus_{H\leq G}A_H$.
Note that the component $A_K$ in grade $H$ is distinct from the grade $K$.
Since $\mu_R(G)$ is generated by $R,I,C$'s, we can endow $A$ with a left $\mu_R(G)$-module structure by giving actions of $R,I,C$'s on $A$.
The maps $C_{g, H}, I_H^L, R^H_J$ map the grade $H$ to grades $^gH, L, J$ respectively, and map the other grades to zero.
Their action on the grade $H$ is described as follows:
\begin{enumerate}[(1)]
\item $I_H^L$ action on the grade $H$:\\
If $H = L$, $I_H^L$ acts as the identity on grade $H$.\\
If $H < L$, $I_H^L$ maps an element in $A_H$ to its corresponding copy $A_H$ in grade $L$.
To distinguish $A_H$ from its copy in grade $L$, we write its copy in grade $L$ as $I_H^L A_H$ from now on.
That is, grade $H$ is written as $$A_H = \left((I^H_H M)\otimes_R(I^H_H N)\right) \oplus \bigoplus_{K<H}I^H_K A_K.$$
\item $C_{g, H}$ action on the grade $H$:\\
We define the action of $C_{g, H}$ by induction on the cardinality of $H$ as follows.
For $m\otimes n\in (I^H_HM)\otimes (I^H_HN)$ and $x\in A_K$, where $K<H$,
\begin{equation*}\begin{split}
C_{g,H}(m\otimes n) &:= (C_{g,H}m)\otimes(C_{g,H}n)\in A_{^gH}\\
C_{g,H}I^H_K(x) &:= I^{^gH}_{^gK}C_{g, K}(x)\in A_{^gH}.
\end{split}\end{equation*}
\item $R^H_J$ action on the grade $H$:\\
We also define the action of $R_J^H$ by induction on the cardinality of $H$ as follows.
For $m\otimes n\in (I^H_HM)\otimes (I^H_HN)$ and $x\in A_K$, where $K<H$,
\begin{equation*}\begin{split}
R^H_J(m\otimes n) &:= (R^H_Jm)\otimes (R^H_Jn)\in A_J\\
R^H_JI^H_K(x) &:= \sum_{g\in [J\backslash H/K]} I_{^gK\cap J}^J C_g R^K_{K\cap J^g}(x)\in A_J.
\end{split}\end{equation*}
\end{enumerate}
\begin{defn}[Box product in \B]
\label{def:box_product_2}
Based on the left $\mu_R(G)$-module $\oplus_{H\leq G}A_H$, we can define $M\square N$ as $$M\square N:= \left(\oplus_{H\leq G}A_H\right)/FR,$$ where $FR$ is a submodule, called the Frobenius reciprocity submodule, generated by elements of the form
$$a\otimes (I_K^Hb) - I_K^H((R^H_Ka)\otimes b)$$
and $$(I_K^Hc)\otimes d - I_K^H(c\otimes (R_K^H d))$$ for all $K<H$, $a\in I^H_HM$, $b\in I^K_KN$, $c\in I^K_KM$ and $d\in I^H_HN$.
\end{defn}
Naturally, the image of $A_H$ under the quotient is called the grade $H$ of $M\square N$.

\begin{prop}
$\oplus_{H\leq G}\hom_{\Omega_R(G)}(G/H, G/G)$ is the unit for box product in \B.
\end{prop}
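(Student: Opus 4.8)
The plan is to construct an explicit natural isomorphism $\epsilon_N\colon U\square N\to N$ for the module $U:=\oplus_{H\leq G}\hom_{\Omega_R(G)}(G/H,G/G)$, and to note that, because the construction of $\square$ in Definition \ref{def:box_product_2} is symmetric in its two arguments (swapping the tensor factors interchanges the two families of Frobenius generators), it suffices to treat $U\square N$ in order to conclude that $U$ is a two-sided unit. Write $B_H:=I^H_HU=\hom_{\Omega_R(G)}(G/H,G/G)$; by the basis lemma of \cite{Thevenaz} this is free on the elements $I^H_LR^G_L$ with $L\leq H$ up to $H$-conjugacy, and the element $R^G_H$ (the case $L=H$) is the multiplicative unit. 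Unwinding the inductive definition, and using $I^H_KI^K_J=I^H_J$ to identify nested copies, the pre-quotient module decomposes as $A_H=\bigoplus_{K\leq H}I^H_K\bigl(B_K\otimes_R I^K_KN\bigr)$, with the summand $K=H$ the top part $B_H\otimes_R I^H_HN$.

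First I would define the two candidate maps. Let $\eta_N\colon N\to U\square N$ send $n\in I^H_HN$ to the class of $R^G_H\otimes n$, and let $\epsilon_N$ send the summand $I^H_K((I^K_LR^G_L)\otimes m)$ to $I^H_KI^K_LR^K_Lm=I^H_LR^K_Lm$; in particular $\epsilon_N$ is built to commute with the $I$-action and to send $R^G_H\otimes n\mapsto n$. The formula on the top part is the Burnside-ring module action $(I^H_LR^G_L)\cdot n=I^H_LR^H_Ln$.

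Next I would verify that $\eta_N$ and $\epsilon_N$ are $\mu_R(G)$-module homomorphisms and that $\epsilon_N$ descends to the quotient by $FR$. For $\eta_N$ this is quick: compatibility with $R^H_J$, with $I^L_H$, and with $C_{g,H}$ follows respectively from $R^H_JR^G_H=R^G_J$ (relation (1)), from the Type 1 relation together with $R^L_HR^G_L=R^G_H$, and from $C_gR^G_H=R^G_{{}^gH}C_g$ (relation (4)). That $\epsilon_N$ kills the Type 2 generators $(I^H_Kc)\otimes d-I^H_K(c\otimes R^H_Kd)$ is immediate from $I^H_KI^K_L=I^H_L$ and $R^K_LR^H_K=R^H_L$. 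The Type 1 generators $a\otimes(I^H_Kb)-I^H_K((R^H_Ka)\otimes b)$ are the delicate case: both sides must be expanded with the double coset formula (relation (6)) applied to $R^H_JI^H_K$ and to $R^H_KI^H_J$, after which the two resulting sums over $[J\backslash H/K]$ and $[K\backslash H/J]$ are matched under $g\mapsto g^{-1}$ using the identity $I^H_{{}^gL}C_{g,L}=I^H_L$ valid for $g\in H$ (which itself follows from relations (0) and (5)). The same double coset bookkeeping is exactly what is needed to check that $\epsilon_N$ commutes with the restriction action; compatibility with $C$ uses (4) and (5), and compatibility with $I$ is built in.

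Finally I would show the two maps are mutually inverse and natural. The composite $\epsilon_N\eta_N=\mathrm{id}_N$ is immediate since $R^G_H$ is a unit. For $\eta_N\epsilon_N=\mathrm{id}$, observe that modulo $FR$ the Type 2 relation rewrites each top-part element $I^H_LR^G_L\otimes n$ as the lower copy $I^H_L(R^G_L\otimes R^H_Ln)$, so $U\square N$ is generated as a $\mu_R(G)$-module by the standard elements $R^G_H\otimes n$; since $\eta_N\epsilon_N$ is a module endomorphism fixing each such generator ($\eta_N\epsilon_N(R^G_H\otimes n)=\eta_N(n)=R^G_H\otimes n$), it is the identity. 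Naturality of $\epsilon_N$ in $N$ is clear because its defining formula involves no choices depending on $N$. I expect the main obstacle to be precisely the Type 1 / restriction computation: reconciling the double coset sum produced by $R^H_JI^H_K$ on the Burnside factor $B_H$ with the one produced on $N$, where the conjugation correction $I^H_{{}^gL}C_{g,L}=I^H_L$ is the key gadget; the remaining verifications are formal applications of relations (0)--(5).
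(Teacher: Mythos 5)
Your argument is correct and is essentially the paper's own proof: the paper likewise uses the basis $I^H_L R^G_L$ of $\hom_{\Omega_R(G)}(G/H,G/G)$ to identify the generators of each grade of the box product with the unit and exhibits the mutually inverse grade-wise maps $m\mapsto m\otimes n_H$ and $I^H_L(R^H_Lm\otimes n_H)\mapsto I^H_LR^H_Lm$, only with the unit placed in the second tensor factor (equivalent by the symmetry you note) and with the module-map and $FR$ verifications left as ``easy to check.'' Your write-up is the mirror image of that construction and additionally carries out the omitted verifications, correctly locating the only nontrivial one in the double-coset matching for the Type 1 Frobenius generators.
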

\begin{proof}
Let $M$ be a left $\mu_R(G)$-module and $N = \oplus_{H\leq G}\hom_{\Omega_R(G)}(G/H, G/G)$.
Then $I^H_HN$ is an $R$-module generated by $G/H\xleftarrow[]{\pi^H_L}G/L\to G/G = I^H_LR^H_Ln_H$, where $$n_H:=\left(G/H=G/H\to G/G\right).$$
Thus, $I^H_HM\otimes I^H_HN$ is a $R$-module generated by $m\otimes I^H_LR^H_Ln_H = I^H_L(R^H_Lm\otimes n_H)$.
Then we get a natural bijection between each grade of $M$ and $M\square N$
\begin{equation*}\begin{split}
F:I^H_HM&\longleftrightarrow I^H_H(M\square N)\\
m&\longmapsto m\otimes n_H\\
I^H_LR^H_Lm&\longmapsfrom I^H_L(R^H_Lm\otimes n_H)\in I^H_HM\otimes I^H_HN\\
I^H_KF^{-1}(x)&\longmapsfrom I^H_Kx\in I^H_KI^K_K(M\square N).
\end{split}\end{equation*}
Here $F^{-1}$ is defined by induction on the grade.
It is easy to check that this is a bijection and that it preserves the $\mu_R(G)$-module structure.
Thus, $M\square N$ is isomorphic to $M$ as a $\mu_R(G)$-module and $N$ is the unit for box product in \B.
\end{proof}
In this way, $(\B, \oplus_{H\leq G}\hom_{\Omega_R(G)}(G/H, G/G), \square)$ is a symmetric monoidal category.

\section{Equivalence of box product in $\A$ and \B}
\begin{thm}
\label{thm:main}
The equivalence of categories $\Phi:\A\xrightarrow[]{\cong}\B$ is a symmetric monoidal equivalence.
In other words, there are a natural isomorphism $\Phi M\square \Phi N \cong \Phi(M\square N)$ for any two Mackey functors $M, N\in\A$, and a compatible natural isomorphism $B^G(-)\xrightarrow[]{\cong}\hom_{\Omega_R(G)}(G/H, G/G)$.
\end{thm}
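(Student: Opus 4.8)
The plan is to prove the natural isomorphism $\Phi M \square \Phi N \cong \Phi(M\square N)$ by showing that the explicit $\mu_R(G)$-module of Definition~\ref{def:box_product_2} satisfies the same universal property that characterizes the categorical box product in $\A$, namely the Dress pairing correspondence of Lemma~\ref{lem:dress}. Since $\Phi$ is already an equivalence of categories, it suffices to exhibit, naturally in a test module $P$, a chain of bijections $\hom_{\B}(\Phi M \square \Phi N, P) \cong \mathrm{Dress}(M, N; \Phi^{-1}P) \cong \hom_{\A}(M\square N, \Phi^{-1}P) \cong \hom_{\B}(\Phi(M\square N), P)$, and then to invoke the Yoneda lemma. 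The middle bijection is Lemma~\ref{lem:dress}, and the last is full faithfulness of $\Phi$, so the real content lies in the first bijection.

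To establish that first bijection I would first observe that $\Phi M \square \Phi N = A/FR$ is generated as a $\mu_R(G)$-module by the diagonal summands $(I^H_HM)\otimes(I^H_HN)$ over all $H\leq G$, since every remaining summand $I^H_K A_K$ of a grade is by construction the image of a lower grade under a transfer. Hence a $\mu_R(G)$-module homomorphism $\theta\colon A\to P$ is determined by its restrictions $\theta_H\colon (I^H_HM)\otimes(I^H_HN)\to I^H_HP$, and, using $I^H_H\Phi M = M(G/H)$, these are exactly the orbit components $\theta_{G/H}$ of a candidate Dress pairing into $\Phi^{-1}P$. The task is to match the defining consistency conditions with the three Dress diagrams. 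I would check that commuting with the conjugation action $C_{g,H}$ and with the restriction action $R^H_K$ (defined on $A$ by $C_{g,H}(m\otimes n)=C_{g,H}m\otimes C_{g,H}n$ and $R^H_K(m\otimes n)=R^H_Km\otimes R^H_Kn$) is precisely the first (restriction) Dress diagram for the generating morphisms $c_{g,H}$ and $\pi^H_K$; and that $\theta$ annihilating the two families of Frobenius reciprocity generators is precisely the second and third (transfer) Dress diagrams for $\pi^H_K$, where $t_{\pi^H_K}=I^H_K$ and $r_{\pi^H_K}=R^H_K$. Because every morphism of $\Omega_R(G)$ decomposes into $R$, $I$, $C$ by the basis Lemma and every finite $G$-set is a disjoint union of orbits, additivity of $M$, $N$, $P$ reduces the full Dress conditions (for all $X$, $Y$, $f$) to these orbit-level conditions for the generating maps.

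The step I expect to be the main obstacle is verifying that an \emph{arbitrary} Dress pairing, i.e. a family $(\theta_{G/H})_H$ satisfying the orbit-level diagrams, genuinely assembles into a \emph{well-defined} $\mu_R(G)$-module map on all of $A$ that vanishes on $FR$ — the inverse direction of the first bijection. The delicate point is that the action of $R^H_J$ on a transferred element $I^H_K(x)$ is defined in Definition~\ref{def:box_product_2} via the double-coset formula $R^H_JI^H_K(x)=\sum_{g}I^J_{{}^gK\cap J}C_gR^K_{K\cap J^g}(x)$, mirroring relation~(6) of the Mackey algebra, so I must confirm the candidate map respects this formula. This amounts to showing that a transfer Dress diagram followed by a restriction reproduces exactly the Mackey double-coset decomposition. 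I would carry this out by induction on $|H|$, paralleling the inductive definition of the $R$, $I$, $C$ actions and repeatedly applying relations (0)--(7) to reduce each verification to the generating cases already identified with the Dress diagrams; establishing well-definedness of $F^{-1}$ along the grading is the technical heart of the argument.

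Finally, for the unit comparison I would note that by definition $\Phi B^G = \bigoplus_{H\leq G}B^G(G/H) = \bigoplus_{H\leq G}\hom_{\Omega_R(G)}(G/H, G/G)$, so the required unit isomorphism is literally the identity; combined with the preceding Proposition (which identifies this module as the unit in $\B$) and with $B^G$ being the unit in $\A$, the unit comparison is automatic. The monoidal coherence then follows formally: on both sides the associativity, unit, and symmetry constraints are the unique maps induced by the shared universal property of the box product, so the pentagon, unit triangles, and symmetry hexagons commute by uniqueness, and $\Phi$ is a symmetric monoidal equivalence.
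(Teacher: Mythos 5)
Your proposal follows essentially the same route as the paper: the paper proves the theorem by chaining the bijections $\hom_{\B}(\Phi M\square\Phi N,\Phi P)\cong\text{Dress}(M,N;P)\cong\hom_{\A}(M\square N,P)\cong\hom_{\B}(\Phi(M\square N),\Phi P)$ and concluding by naturality in $P$, with the first bijection isolated as Lemma~\ref{lem:main_lem}, whose proof proceeds exactly as you sketch (restriction to the diagonal summands in one direction, grade-by-grade assembly of $\beta$ from $\theta$ and vanishing on $FR$ via the second and third Dress diagrams in the other). Your identification of the unit via $\Phi(B^G(-))=\oplus_{H\leq G}\hom_{\Omega_R(G)}(G/H,G/G)$ is also the paper's argument, so the proposal is correct and matches the paper's strategy.
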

\begin{lem}
\label{lem:main_lem}
For any Mackey functors $M, N, P\in\A$, there is a natural bijection $$\text{Dress}(M, N; P)\cong\hom_{\B}(\Phi M\square\Phi N, \Phi P).$$
\end{lem}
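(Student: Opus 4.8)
The plan is to prove the bijection by showing that \emph{both} sides are governed by the same ``orbit data'': a family of $R$-module maps $\theta_{G/H}\colon M(G/H)\otimes N(G/H)\to P(G/H)$, one for each subgroup $H\le G$, subject to three families of compatibility relations --- one for the restrictions $R^H_K$, one (in two mirror-image versions) for the transfers $I^H_K$, and one for the conjugations $C_g$. I would first identify $\operatorname{Dress}(M,N;P)$ with such constrained families, then identify $\hom_{\B}(\Phi M\square\Phi N,\Phi P)$ with the \emph{same} constrained families, and finally observe that the resulting correspondence is literally the identity on orbit data, so that the two assignments are mutually inverse and the bijection is natural in $M,N,P$ by construction.

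For the Dress side, since every finite $G$-set is a coproduct of orbits and Mackey functors are additive, a Dress pairing is determined by its values $\theta_{G/H}$ on orbits. Applying the transfer diagrams of Lemma~\ref{lem:dress} to the inclusion of a single orbit into a disjoint union forces the ``off-diagonal'' components of $\theta_X$ to vanish, so $\theta_X$ is the diagonal sum of the $\theta_{G/H}$. Because any $G$-map between orbits factors as a conjugation followed by a quotient map $\pi^H_K$, the three diagrams for an arbitrary $f\colon X\to Y$ reduce to the three diagrams for the maps $\pi^H_K$ and $c_{g}$; these are exactly the restriction relation $R^H_K\theta_{G/H}(m\otimes n)=\theta_{G/K}(R^H_Km\otimes R^H_Kn)$, the two Frobenius relations $\theta_{G/H}(a\otimes I^H_Kb)=I^H_K\theta_{G/K}(R^H_Ka\otimes b)$ and its mirror image, and the conjugation relation $C_g\theta_{G/H}=\theta_{G/{}^gH}\circ(C_g\otimes C_g)$.

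For the module side, note that $A=\oplus_{H}A_H$ is generated as a $\mu_R(G)$-module by the top pieces $(I^H_H\Phi M)\otimes(I^H_H\Phi N)=M(G/H)\otimes N(G/H)$, since the lower grades $I^H_KA_K$ are obtained from grade $K$ by the action of $I^H_K$. Hence a homomorphism $\phi\colon\Phi M\square\Phi N\to\Phi P$ is determined by its restrictions $\theta_{G/H}:=\phi|_{M(G/H)\otimes N(G/H)}$, which land in $I^H_H\Phi P=P(G/H)$. The requirement that $\phi$ commute with the $R^H_K$- and $C_g$-actions on the top pieces yields precisely the restriction and conjugation relations above, while the requirement that $\phi$ annihilate the two families of generators of the Frobenius reciprocity submodule $FR$ of Definition~\ref{def:box_product_2} yields precisely the two Frobenius relations --- the generators $a\otimes I^H_Kb-I^H_K(R^H_Ka\otimes b)$ and $(I^H_Kc)\otimes d-I^H_K(c\otimes R^H_Kd)$ match the two transfer diagrams of Lemma~\ref{lem:dress} term for term. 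Conversely, given a family $\{\theta_{G/H}\}$ satisfying the three relations, I would build $\phi$ by induction on $|H|$, setting $\phi=\theta_{G/H}$ on the top piece of grade $H$ and $\phi=I^H_K\circ\phi|_{A_K}$ on each copy $I^H_KA_K$.

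The main obstacle is the inductive verification that this $\phi$ is a \emph{bona fide} $\mu_R(G)$-module homomorphism and descends to the quotient by $FR$. The delicate point is compatibility with the $R^H_J$-action on the lower grades $I^H_KA_K$, which is defined through the double-coset formula (identity (6)) rather than diagonally; here one must rewrite $\phi(R^H_JI^H_Kx)=\sum_{g}I^J_{{}^gK\cap J}C_gR^K_{K\cap J^g}\phi(x)$ and recognize the right-hand side, via identity (6) \emph{in the genuine module} $\Phi P$, as $R^H_JI^H_K\phi(x)$. This is exactly the step where the three orbit relations are shown to be not merely necessary but sufficient. Once this is in place, both sides of the desired bijection are identified with the same set of constrained families $\{\theta_{G/H}\}$, the two constructions are visibly inverse, and naturality in $M$, $N$ and $P$ follows since every ingredient is expressed through restriction to orbits.
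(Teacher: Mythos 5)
Your proposal is correct and follows essentially the same route as the paper: extract the orbit maps $\theta_{G/H}$ by restricting a module homomorphism to the top summands $M(G/H)\otimes N(G/H)$, and conversely extend a Dress pairing to a module map inductively via $I^H_K\circ\phi|_{A_K}$ on the lower pieces, with the $FR$ generators matching the two transfer diagrams and the double-coset formula handling compatibility of the $R^H_J$-action on lower grades. Your repackaging of both sides as the same set of constrained orbit data is a slightly cleaner organization than the paper's two explicit mutually inverse maps, but it is not a genuinely different argument.
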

\begin{proof}
Given $\beta\in\hom_{\B}(\Phi M\square\Phi N, \Phi P)$, we map it to $\theta\in\text{Dress}(M, N; P)$ defined as follows.
For each $H\leq G$, $\beta$ maps grade $I^H_H(\Phi M\square\Phi N)$ to grade $I^H_H\Phi P$, because $\beta(x) = \beta(I^H_Hx) = I^H_H\beta(x)\in I^H_H\Phi P$ for each $x\in I^H_H(\Phi M\square \Phi N)$.
Since $$I^H_H(\Phi M\square\Phi N) = (M(G/H)\otimes N(G/H))\oplus\bigoplus_{K<H}I_K^HA_K/FR$$ and $$I^H_H\Phi P = P(G/H),$$ $\beta$ induces an $R$-module homomorphism $\theta_{G/H}$ $$\theta_{G/H}:M(G/H)\otimes N(G/H)\to P(G/H)$$ by restricting to the first summand.
For a general finite $G$-set $X = \sqcup_{i=1}^p G/H_i$, $\theta_X$ is defined as 
\begin{equation*}\begin{split}
\theta_X:\bigoplus_{i,j=1}^p M(G/H_i)\otimes N(G/H_j)&\longrightarrow\bigoplus_{i=1}^pP(G/H_i)\\
m_i\otimes n_j & \longmapsto\left\{\begin{split}
\theta_{G/H_i}(m_i\otimes n_i) & \text{, if } i = j\\
0 & \text{, otherwise.}
\end{split}\right.
\end{split}\end{equation*}

Having constructed $\theta$, we now proceed to show that $\theta\in\text{Dress}(M, N; P)$.
Given finite $G$-sets $X,Y$ and a $G$-equivariant map $f:X\to Y$, it is sufficient to show that the three diagrams in Lemma \ref{lem:dress} commute.

If both $X$ and $Y$ are orbits, say $X = G/K$ and $Y = G/H$, observe that a $G$-equivariant map $f$ from $G/K$ to $G/H$ must be of the the form $f = \pi^H_{^gK}c_{g,K}$ for some $g\in G$.
By composition of commuting diagrams, we only need to consider the case where $f = c$ and $f = \pi$.

When $f = c_{g, H}:G/H\to G/^gH$, we have that $r_f = C_{g^{-1}, ^gH}$ and $t_f = C_{g, H}$.
The first diagram commutes because 
\begin{equation*}\begin{split}
r_f\theta_{Y}(m\otimes n)
& = C_{g^{-1}, ^gH}\beta(m\otimes n) = \beta(C_{g^{-1}, ^gH}(m\otimes n))\\
& = \beta(C_{g^{-1}, ^gH}m\otimes C_{g^{-1}, ^gH}n)) = \theta_{X}(r_fm\otimes r_fn).
\end{split}\end{equation*}
The second diagram commutes because
\begin{equation*}\begin{split}
t_f\theta_{X}(m\otimes r_fn)
& = C_{g,H}\beta(m\otimes C_{g^{-1}, ^gH}n) = \beta(C_{g,H}(m\otimes C_{g^{-1}, ^gH}n))\\
& = \beta(C_{g,H}m\otimes n) = \theta_{Y}(t_fm\otimes n).
\end{split}\end{equation*}
The third diagram commutes similarly.

When $f = \pi^H_K:G/K\to G/H$, we have that $r_f = R^H_K$ and $t_f = I_K^H$.
The first diagram commutes because 
\begin{equation*}\begin{split}
r_f\theta_{Y}(m\otimes n)
& = R^H_K\beta(m\otimes n) = \beta(R^H_K(m\otimes n))\\
& = \beta(R^H_Km\otimes R^H_Kn)) = \theta_{X}(r_fm\otimes r_fn).
\end{split}\end{equation*}
The second diagram commutes because
\begin{equation*}\begin{split}
t_f\theta_{X}(m\otimes r_fn)
& = I_K^H\beta(m\otimes R^H_Kn) = \beta(I_K^H(m\otimes R^H_Kn))\\
& = \beta(I_K^Hm\otimes n) = \theta_{Y}(t_fm\otimes n).
\end{split}\end{equation*}
The third diagram commutes similarly.

Now, if $Y$ is an orbit $G/H$ and $X = \sqcup_{i=1}^p G/K_i$ is a general $G$-set, denote the restriction of $f$ to $G/K_i$ by $f_i:G/K_i\to G/H$.
Thus, $r_f:M(G/H)\to\oplus_{i=1}^p M(G/K_i)$ is the sum of $r_{f_i}:M(G/H)\to M(G/K_i)$.
Similarly, $t_f:\oplus_{i=1}^p M(G/K_i)\to M(G/H)$ is determined by components $t_{f_i}:M(G/K_i)\to M(G/H)$.
The first diagram commutes because
\begin{equation*}\begin{split}
r_f\theta_{Y}(m\otimes n)
& = \sum_{i=1}^pr_{f_i}\theta_{Y}(m\otimes n) = \sum_{i=1}^p\theta_{X}(r_{f_i}m\otimes r_{f_i}n)\\
& = \theta_{X}(\sum_{i=1}^pr_{f_i}m\otimes r_{f_i}n) = \theta_{X}(\sum_{i=1}^pr_{f_i}m\otimes \sum_{i=1}^pr_{f_i}n)\\
& = \theta_{X}(r_fm\otimes r_fn).
\end{split}\end{equation*}
The second diagram commutes because
\begin{equation*}\begin{split}
t_f\theta_{\mathbf{b}}(m\otimes r_fn)
& = t_f\theta_{X}(\sum_{i=1}^p m_i\otimes \sum_{i=1}^pr_{f_i}n)
= t_f\theta_{X}(\sum_{i=1}^p m_i\otimes r_{f_i}n)\\
& = \sum_{i=1}^p t_f\theta_{X}(m_i\otimes r_{f_i}n)
= \sum_{i=1}^p t_{f_i}\theta_{G/K_i}(m_i\otimes r_{f_i}n)\\
& = \sum_{i=1}^p \theta_{Y}(t_{f_i}m_i\otimes n)
= \theta_{Y}(\sum_{i=1}^p t_{f_i}m_i\otimes n)\\
& = \theta_{Y}(t_f m\otimes n)
\end{split}\end{equation*}
for $m = \sum_{i=1}^p m_i$, where $m_i\in M(G/K_i)$.
The third diagram commutes similarly.

Lastly, the general case: $X = \sqcup_{i=1}^pX_i, Y = \sqcup_{i=1}^p G/H_i$, and $f$ maps $X_i$ to $G/H_i$.
Thus, $r_f$ maps $M(G/H_i)$ to $M(X_i)$ and $t_f$ maps $M(X_i)$ to $G/H_i$.
For the first diagram, take $m\in G/H_i$ and $n\in G/H_j$.
Thus, $r_f m\in M(X_i)$ and $r_f n\in M(X_j)$.
If $i\neq j$, then $\theta_{Y}(m\otimes n) = 0$ and $\theta_{X}(r_fm\otimes r_fn) = 0$.
If $i = j$, then this is the case when $Y$ is an orbit.
For the second diagram, take $m\in M(X_i)$ and $n\in G/H_j$.
Thus, $t_f m\in M(G/H_i)$ and $r_f n\in M(X_j)$.
If $i\neq j$, then $\theta_{X}(m\otimes r_fn) = 0$ and $\theta_{Y}(t_f m\otimes n) = 0$.
If $i = j$, it reduces to the case when $Y$ is an orbit.
The third diagram commutes similarly.

Given a Dress pairing $\theta\in\text{Dress}(M, N; P)$, we define $\beta$, a map from $\Phi M\square \Phi N$ to $P$ grade by grade, as follows:
\begin{equation*}\begin{split}
\beta_H:I^H_H(\Phi M\square\Phi N)&\longrightarrow I^H_H\Phi P = P(G/H)\\
M(G/H)\otimes N(G/H)\ni m\otimes n&\longmapsto \theta_{G/H}(m\otimes n)\\
I_K^HA_K\ni I^H_K(x)&\longmapsto I^H_K(\beta_K(x))
\end{split}\end{equation*}

Having constructed $\beta$, we now proceed to show that $\beta$ is indeed a map of $\mu_R(G)$-modules.
$\beta$ is linear in multiplication by elements in $\mu_R(G)$:
It is enough to check this for the generators $R, I, C$'s.
Say $K<H\leq G$ and take $m\otimes n\in M(G/H)\otimes N(G/H)$.
Then
\begin{equation*}\begin{split}
R^H_K\beta(m\otimes n) 
& = R^H_K\theta_{G/H}(m\otimes n) = \theta_{G/K}(R^H_K m\otimes R^H_K n)\\
& = \beta(R^H_K(m\otimes n))\\
C_{g, H}\beta(m\otimes n)
& = C_{g, H}\theta_{G/H}(m\otimes n) = \theta_{G/^gH}(C_{g, H}m\otimes C_{g, H}n)\\
& = \beta(C_{g, H}(m\otimes n)).
\end{split}\end{equation*}
Take $x\in A_K$ and $S\in\mu_R(G)$.
Since $SI^H_K$ acts on the grade $K$, $SI^H_K\beta(x) = \beta(SI^H_Kx)$ by induction.
Therefore, $$S\beta(I^H_Kx) = SI^H_K(x) = \beta(SI^H_Kx).$$

Let us show that $\beta$ maps the Frobenius reciprocity submodule $FR_H$ to zero for each $H\leq G$.
Take $K<H$, $a\in M(G/H)$ and $b\in N(G/K)$.
By the second commuting diagram in lemma \ref{lem:dress}, we have 
\begin{equation*}\begin{split}
\beta(I^H_K(R^H_Ka\otimes b)) 
& = I^H_K\beta(R^H_Ka\otimes b) = I^H_K\theta_{G/K}(R^H_Ka\otimes b)\\ 
& = \theta_{G/H}(a\otimes I^H_K b) = \beta(a\otimes I^H_K b).
\end{split}\end{equation*}
Take $K<H$, $c\in M(G/K)$ and $d\in N(G/H)$.
By the third commuting diagram in lemma \ref{lem:dress}, we have
\begin{equation*}\begin{split}
\beta(I^H_K(c\otimes R^H_Kd)) 
& = I^H_K\beta(c\otimes R^H_Kd)  = I^H_K\theta_{G/K}(c\otimes R^H_Kd)\\
& = \theta_{G/H}(I^H_K c\otimes d) = \beta(I^H_Kc\otimes d).
\end{split}\end{equation*}

Lastly, it is easy to see the composition of those two maps above is identity in either way.
For instance, the map $$\text{Dress}(M, N; P)\to \hom_{\B}(\Phi M\square\Phi N, \Phi P)\to \text{Dress}(M, N; P)$$, which maps $\theta\mapsto\beta\mapsto\theta'$, is identity because $\theta'_{G/H}$ equals to the restriction of $\beta$ to the first summand of the grade $H$, which in turn equals to the maps $\theta_{G/H}$ according to the constructions above.
Thus, $\theta' = \theta$.
\end{proof}

\begin{proof}[Proof for Theorem \ref{thm:main}]
Fix three Mackey functors $M,N,P\in\A$.
By Lemma \ref{lem:main_lem}, there is a natural bijection $$\text{Dress}(M, N; P)\cong\hom_{\B}(\Phi M\square\Phi N, \Phi P).$$
By Lemma \ref{lem:dress}, there is a natural bijection $$\hom_{\A}(M\square N, P)\cong\text{Dress}(M, N; P).$$
Since $\Phi:\A\xrightarrow[]{\cong}\B$ is an equivalence of categories, there is a natural bijection $$\hom_{\A}(M\square N, P)\cong\hom_{\B}(\Phi (M\square N), \Phi P).$$
Therefore, we get a natural bijection $$\hom_{\B}(\Phi M\square\Phi N, \Phi P)\cong\hom_{\B}(\Phi (M\square N), \Phi P).$$
Therefore, $\Phi M\square \Phi N$ is naturally isomorphic to $\Phi (M\square N)$ as a left $\mu_R(G)$-module.

Moreover, the equality $\Phi(B^G(-)) = \oplus_{H\leq G}B^G(G/H) = \oplus_{H\leq G}\hom_{\Omega_R(G)}(G/H, G/G)$ verifies the correspondence of units for box products in $\A$ and $\B$.
Thus, the equivalence $\Phi$ is monoidal.
\end{proof}

\end{document}